\newcommand{\klockan}{\the\hours:{\ifnum\minutes<10 0\fi}\the\minutes}
\newcommand{\tid}{\today\ \klockan}
\newcommand{\prtid}{\smash{\raise 10mm \hbox{\LaTeX ed \tid}}}
\renewcommand{\prtid}{}
\def\sectionmark#1{} 
\def\subsectionmark#1{}
\newcommand{\sectnr}{\ifnum \c@secnumdepth >\z@
                 \thesection.\hskip 1em\relax \fi}
\def\@evenhead{\footnotesize\rm\thepage\hfil\leftmark\hfil\llap{\prtid}}
\def\@oddhead{\footnotesize\rm\rlap{\prtid}\hfil\rightmark\hfil\thepage}
\def\tableofcontents{\section*{Contents} 
 \@starttoc{toc}}
\def\@biblabel#1{#1.}
\let\Thebibliography=\thebibliography
\renewcommand{\thebibliography}[1]{\def\@mkboth##1##2{}\Thebibliography{#1}
\addcontentsline{toc}{section}{References}
\frenchspacing 
\setlength{\@topsep}{0pt}
\setlength{\itemsep}{0pt}%
\setlength{\parskip}{0pt plus 2pt}%
}
\def\mdots@{\mathinner.\nonscript\!.%
 \ifx\next,.\else\ifx\next;.\else\ifx\next..\else
 \nonscript\!\mathinner.\fi\fi\fi}
\let\ldots\mdots@
\let\cdots\mdots@
\let\dotso\mdots@
\let\dotsb\mdots@
\let\dotsm\mdots@
\let\dotsc\mdots@
\def\vdots{\vbox{\baselineskip2.8\p@ \lineskiplimit\z@
    \kern6\p@\hbox{.}\hbox{.}\hbox{.}\kern3\p@}}
\def\ddots{\mathinner{\mkern1mu\raise8.6\p@\vbox{\kern7\p@\hbox{.}}%
    \raise5.8\p@\hbox{.}\raise3\p@\hbox{.}\mkern1mu}}
\let\Enumerate=\enumerate
\renewcommand{\enumerate}{\Enumerate%
\setlength{\@topsep}{0pt}
\setlength{\itemsep}{0pt}%
\setlength{\parskip}{0pt plus 1pt}%
\renewcommand{\theenumi}{\textup{(\alph{enumi})}}%
\renewcommand{\labelenumi}{\theenumi}%
}
\let\endEnumerate=\endenumerate
\renewcommand{\endenumerate}{\endEnumerate\unskip}
\def\@seccntformat#1{\csname the#1\endcsname.\quad}
\renewcommand*\l@section[2]{%
  \ifnum \c@tocdepth >\z@
    \addpenalty\@secpenalty
    \addvspace{0em \@plus\p@}
    \setlength\@tempdima{1.5em}%
    \begingroup
      \parindent \z@ \rightskip \@pnumwidth
      \parfillskip -\@pnumwidth
      \leavevmode \bfseries
      \advance\leftskip\@tempdima
      \hskip -\leftskip
      #1\nobreak\hfil \nobreak\hb@xt@\@pnumwidth{\hss #2}\par
    \endgroup
  \fi}
\newcommand{\authortitle}[2]{\author{#1}\title{#2}\markboth{#1}{#2}}
\newcommand{\auth}[2]{{#1, #2.}}
\newcommand{\idxauth}{\auth}
\newcommand{\art}[6]{{\sc #1, \rm #2, \it #3 \bf #4 \rm (#5), \mbox{#6}.}}
\newcommand{\artin}[3]{{\sc #1, \rm #2, in #3.}}
\newcommand{\artnopt}[6]{{\sc #1, \rm #2, \it #3 \bf #4 \rm (#5), \mbox{#6}}}
\newcommand{\book}[3]{{\sc #1, \it #2, \rm #3.}}
\newcommand{\AND}{{\rm and }}
\newtheoremstyle{descriptive}%
  {\topsep}   
  {\topsep}   
  {\rmfamily} 
  {}          
  {\bfseries} 
  {.}         
  { }         
  {}          
\newtheoremstyle{propositional}%
  {\topsep}   
  {\topsep}   
  {\itshape}  
  {}          
  {\bfseries} 
  {.}         
  { }         
  {}          
\newtheoremstyle{remarkstyle}%
  {\topsep}   
  {\topsep}   
  {\rmfamily}  
  {}          
  {\itshape} 
  {.}         
  { }         
  {}          
\theoremstyle{propositional}
\newtheorem{thm}{Theorem}[section]
\newtheorem{prop}[thm]{Proposition}
\newtheorem{lem}[thm]{Lemma}
\newtheorem{cor}[thm]{Corollary}
\theoremstyle{descriptive}
\newtheorem{deff}[thm]{Definition}
\renewenvironment{proof}[1][\proofname]{\par
  \pushQED{\qed}%
  \normalfont 
  \trivlist
  \item[\hskip\labelsep
        \itshape
    #1\@addpunct{.}]\ignorespaces
}{%
  \popQED\endtrivlist\@endpefalse
}
\newdimen\extrawidth
\def\iintlim#1#2{\setbox0\hbox{$\scriptstyle#1$}%
        \setbox1\hbox{$\scriptstyle#2$}%
        \extrawidth=\wd1 \advance\extrawidth-\wd0
        \ifdim\extrawidth<0pt \extrawidth=0pt\fi%
        \int_{#1\kern\extrawidth \kern .5em}^{#2\kern -\wd1} \kern -.5em%
}
\newcommand{\setm}{\setminus}
\renewcommand{\phi}{\varphi}
\def\vint{\mathop{\mathchoice%
          {\setbox0\hbox{$\displaystyle\intop$}\kern 0.22\wd0%
           \vcenter{\hrule width 0.6\wd0}\kern -0.82\wd0}%
          {\setbox0\hbox{$\textstyle\intop$}\kern 0.2\wd0%
           \vcenter{\hrule width 0.6\wd0}\kern -0.8\wd0}%
          {\setbox0\hbox{$\scriptstyle\intop$}\kern 0.2\wd0%
           \vcenter{\hrule width 0.6\wd0}\kern -0.8\wd0}%
          {\setbox0\hbox{$\scriptscriptstyle\intop$}\kern 0.2\wd0%
           \vcenter{\hrule width 0.6\wd0}\kern -0.8\wd0}}%
          \mathopen{}\int}
\def\vintslides{\mathop{\mathchoice%
          {\setbox0\hbox{$\displaystyle\intop$}\kern 0.22\wd0%
           \vcenter{\hrule height 0.04em width 0.6\wd0}\kern -0.82\wd0}%
          {\setbox0\hbox{$\textstyle\intop$}\kern 0.2\wd0%
           \vcenter{\hrule height 0.04em width 0.6\wd0}\kern -0.8\wd0}%
          {\setbox0\hbox{$\scriptstyle\intop$}\kern 0.2\wd0%
           \vcenter{\hrule height 0.04em width 0.6\wd0}\kern -0.8\wd0}%
          {\setbox0\hbox{$\scriptscriptstyle\intop$}\kern 0.2\wd0%
           \vcenter{\hrule height 0.04em width 0.6\wd0}\kern -0.8\wd0}}%
          \mathopen{}\int}
\newcommand{\Bplus}{B_\limplus}
\newcommand{\Bminus}{B_\limminus}
\DeclareMathOperator{\Div}{div}
\DeclareMathOperator{\capp}{cap}
\newcommand{\cpp}{\capp_{p,|t|^a}}
\newcommand{\cpa}{\capp_{|t|^a}}
\newcommand{\capB}{\capp_{\Bsp}}
\newcommand{\capBtwo}{\capp_{\Bstwo}}
\newcommand{\Bsp}{B^s_p}
\newcommand{\Bstwo}{B^s_2}
\newcommand{\uP}{\itoverline{P}} 
\newcommand{\lP}{\itunderline{P}} 
\DeclareMathOperator{\spt}{supp}
\DeclareMathOperator{\pv}{p.v.}
\DeclareMathOperator*{\osc}{osc}
\newcommand{\bdry}{\partial}
\newcommand{\grad}{\nabla}
\newcommand{\bdy}{\bdry}
\newcommand{\simge}{\gtrsim}
\newcommand{\simle}{\lesssim}
\gdef\eeaa#1pt{#1}}      
\def\accentadjtext#1{\setbox0\hbox{$#1$}\kern   
                \expandafter\eeaa\the\fontdimen1\textfont1 \ht0 }
\def\accentadjscript#1{\setbox0\hbox{$#1$}\kern 
                \expandafter\eeaa\the\fontdimen1\scriptfont1 \ht0 }
\def\accentadjscriptscript#1{\setbox0\hbox{$#1$}\kern   
                \expandafter\eeaa\the\fontdimen1\scriptscriptfont1 \ht0 }
\def\accentadjtextback#1{\setbox0\hbox{$#1$}\kern       
                -\expandafter\eeaa\the\fontdimen1\textfont1 \ht0 }
\def\accentadjscriptback#1{\setbox0\hbox{$#1$}\kern     
                -\expandafter\eeaa\the\fontdimen1\scriptfont1 \ht0 }
\def\accentadjscriptscriptback#1{\setbox0\hbox{$#1$}\kern 
                -\expandafter\eeaa\the\fontdimen1\scriptscriptfont1 \ht0 }
\def\itoverline#1{{\mathsurround0pt\mathchoice
        {\rlap{$\accentadjtext{\displaystyle #1}
                \accentadjtext{\vrule height1.593pt}
                \overline{\phantom{\displaystyle #1}
                \accentadjtextback{\displaystyle #1}}$}{#1}}
        {\rlap{$\accentadjtext{\textstyle #1}
                \accentadjtext{\vrule height1.593pt}
                \overline{\phantom{\textstyle #1}
                \accentadjtextback{\textstyle #1}}$}{#1}}
        {\rlap{$\accentadjscript{\scriptstyle #1}
                \accentadjscript{\vrule height1.593pt}
                \overline{\phantom{\scriptstyle #1}
                \accentadjscriptback{\scriptstyle #1}}$}{#1}}
        {\rlap{$\accentadjscriptscript{\scriptscriptstyle #1}
                \accentadjscriptscript{\vrule height1.593pt}
                \overline{\phantom{\scriptscriptstyle #1}
                \accentadjscriptscriptback{\scriptscriptstyle #1}}$}{#1}}}}
\def\itunderline#1{{\mathsurround0pt\mathchoice
        {\rlap{$\underline{\phantom{\displaystyle #1}
                \accentadjtextback{\displaystyle #1}}$}{#1}}
        {\rlap{$\underline{\phantom{\textstyle #1}
                \accentadjtextback{\textstyle #1}}$}{#1}}
        {\rlap{$\underline{\phantom{\scriptstyle #1}
                \accentadjscriptback{\scriptstyle #1}}$}{#1}}
        {\rlap{$\underline{\phantom{\scriptscriptstyle #1}
                \accentadjscriptscriptback{\scriptscriptstyle #1}}$}{#1}}}}
\newcommand{\Ga}{\Gamma}
\newcommand{\eps}{\varepsilon}
\newcommand{\Om}{\Omega}
\newcommand{\al}{\alpha}
\newcommand{\p}{{$p\mspace{1mu}$}}   
\newcommand{\R}{\mathbf{R}}
\newcommand{\vt}{\tilde{v}}
\newcommand{\ft}{\tilde{f}}
\newcommand{\fb}{\fh}
\newcommand{\fh}{\hat{f}}
\newcommand{\limplus}{{\mathchoice{\vcenter{\hbox{$\scriptstyle +$}}}
  {\vcenter{\hbox{$\scriptstyle +$}}}
  {\vcenter{\hbox{$\scriptscriptstyle +$}}}
  {\vcenter{\hbox{$\scriptscriptstyle +$}}}
}}
\newcommand{\limminus}{{\mathchoice{\vcenter{\hbox{$\scriptstyle -$}}}
  {\vcenter{\hbox{$\scriptstyle -$}}}
  {\vcenter{\hbox{$\scriptscriptstyle -$}}}
  {\vcenter{\hbox{$\scriptscriptstyle -$}}}
}}
\newcommand{\cprime}{$'$}
\newcommand{\setcurrentlabel}[1]{\def\@currentlabel{#1}}
\numberwithin{equation}{section}
\newenvironment{ack}{\medskip{\it Acknowledgement.}}{}
\begin{document}

\authortitle{Jana Bj\"orn}
{Boundary estimates and a Wiener criterion \\
for the fractional Laplacian}
  \author{Jana Bj\"orn \\
\it\small Department of Mathematics, Link\"oping University, SE-581 83 Link\"oping, Sweden\\
\it \small jana.bjorn@liu.se, ORCID\/\textup{:} 0000-0002-1238-6751
}

\date{}
\maketitle

\begin{center}
 Dedicated to Vladimir Maz\cprime ya on his 85th birthday.
\end{center}

\bigskip

\noindent{\small {\bf Abstract}
Using the Caffarelli--Silvestre extension, we show for a general open set
$\Om\subset\R^n$ that a boundary point $x_0$ is regular for the  
fractional Laplace equation $(-\Delta)^su=0$, $0<s<1$, 
if and only if $(x_0,0)$ is regular for the extended weighted equation
in a subset of $\R^{n+1}$.
As a consequence, we 
characterize regular boundary points 
for $(-\Delta)^su=0$ by a Wiener 
criterion involving a Besov capacity.  
A decay estimate for the solutions near 
regular boundary points and the Kellogg property are also obtained.
} 

\bigskip
\noindent
    {\small \emph{Key words and phrases}: 
Besov capacity, Caffarelli--Silvestre extension,
Dirichlet problem, fractional Laplacian, Kellogg property, regular
boundary point, Wiener criterion.
}

\medskip
\noindent
{\small Mathematics Subject Classification (2020): 
Primary: 35R11; Secondary: 35B65, 35J25.
}

\medskip
\noindent
{\small Conflicts of interest: None.}

\section{Introduction}

We assume throughout the paper that $0<s<1$ and that $\Om\subset\R^n$,
$n\ge2$,  is an 
open set such that its complement $\R^n\setm\Om$ has positive Besov
$\Bstwo$-capacity, as given in~\eqref{eq-Bes-cap-intro} and Definition~\ref{def-capB}.
Note that we do not require any additional assumptions about the
regularity of $\Om$, which is allowed to be unbounded.

Consider the Dirichlet boundary value problem in
$\Om$ for the fractional equation 
\begin{equation}   \label{eq-frac-Lap-intro}
(-\Delta)^su=0.
\end{equation}
Recall that
up to a multiplicative constant, the fractional Laplacian is given by
the principle value integral
\[
(-\Delta)^su(x) := C_{n,s} \pv \int_{\R^n} \frac{u(x)-u(y)}{|x-y|^{n+2s}}\,dy.
\]
Solutions of   
$(-\Delta)^su=0$ coincide with the so-called $\al$-harmonic functions
(with $\al=2s$), defined by means of balayage 
and associated with the Riesz potentials $|x|^{\al-n}$, 
as in Bliedtner--Hansen~\cite[Chapter~V.4]{BliHan}, Hoh--Jacob~\cite{HohJac} and
Landkof~\cite[Chapter~IV.5]{Landkof}.

The fractional Laplacian is a nonlocal operator and hence
the Dirichlet boundary data for~\eqref{eq-frac-Lap-intro}
are prescribed on the complement $\R^n\setm\Om$, 
rather than on the boundary~$\bdy\Om$.
The above assumption that the complement  has positive capacity is natural 
since otherwise the complement and the boundary data are not seen
by the Besov space $\Bstwo(\R^n)$, associated with the fractional Laplacian.

We study the boundary behaviour and regularity of the solutions 
of~\eqref{eq-frac-Lap-intro},
i.e.\  whether every solution of the Dirichlet problem
for~\eqref{eq-frac-Lap-intro} in $\Om$, with 
continuous boundary data $f$,  attains its boundary value as the limit 
\[
\lim_{\Om\ni x\to x_0} u(x) =f(x_0) \quad \text{at } x_0\in\bdy\Om.
\]

The following sufficient and necessary condition for 
regular boundary points is proved in Section~\ref{sect-Dir-bdy-reg}.

\begin{thm}  \label{thm-frac-Wiener-Besov}
{\rm (Wiener criterion)}  
A boundary point $x_0\in\bdry\Om$ is regular for\/ $\Om$ with
respect to the fractional equation~\eqref{eq-frac-Lap-intro} if and
only if
\begin{equation}    \label{eq-Wien-cond-intro}
\int_0^1 \frac{\capBtwo(F_r,B(x_0,2r))}{r^{n-2s}}
         \,\frac{dr}{r} = \infty,
\end{equation}
where $F_r=\itoverline{B(x_0,r)}\setm \Om$ and $\capBtwo$ is the 
condensor capacity defined by a Besov seminorm on $\R^n$ as 
\begin{equation}   \label{eq-Bes-cap-intro}  
\capBtwo(F_r,B(x_0,2r)) = \inf_v \int_{\R^n} \int_{\R^n}
   \frac{|v(x)-v(y)|^2}{|x-y|^{n+2s}}\,dx\,dy,
\end{equation}
where the infimum is taken over all $v\in C^\infty_0(B(x_0,2r))$
such that $v\ge1$ on $F_r$.
\end{thm}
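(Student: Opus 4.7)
The plan is to reduce the nonlocal fractional problem to a local, though degenerate, elliptic problem in one higher dimension via the Caffarelli--Silvestre extension, apply the classical Wiener criterion for divergence-form equations with Muckenhoupt $A_2$ weights, and finally identify the resulting weighted condenser capacity with the Besov capacity on the hyperplane.

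Step 1 (Extension equivalence). Set $a=1-2s\in(-1,1)$ and $\widetilde\Om=\R^{n+1}\setm F$, where $F=(\R^n\setm\Om)\times\{0\}$. By the main extension-regularity theorem announced in the abstract (and proved earlier in the paper), $x_0\in\bdy\Om$ is regular for~\eqref{eq-frac-Lap-intro} if and only if $(x_0,0)\in\bdy\widetilde\Om$ is regular for the weighted divergence-form equation $\Div(|t|^a\grad U)=0$ in $\widetilde\Om$.

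Step 2 (Wiener criterion for the weighted equation). Since $a\in(-1,1)$, the weight $|t|^a$ belongs to $A_2(\R^{n+1})$. The Wiener criterion for linear degenerate elliptic operators with $A_2$ weights (due to Fabes--Jerison--Kenig and Fabes--Kenig--Serapioni, and available also in the quasilinear form of Heinonen--Kilpel\"ainen--Martio and Maz'ya--Gariepy) implies that $(x_0,0)$ is regular for $\widetilde\Om$ if and only if
\[
\int_0^1 \frac{\capp_{2,|t|^a}(F\cap \overline{B_{n+1}((x_0,0),r)},\, B_{n+1}((x_0,0),2r))}{r^{(n+1)+a-2}}\,\frac{dr}{r}=\infty.
\]
Here $\capp_{2,|t|^a}$ denotes the $(1,2,|t|^a)$-capacity $\inf\int|\grad V|^2|t|^a$. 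Since $F\subset\{t=0\}$, the inner set above equals $F_r\times\{0\}$, and with $a=1-2s$ the denominator becomes $r^{n-2s}$, matching~\eqref{eq-Wien-cond-intro}.

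Step 3 (Identification of the two capacities). It remains to show that, up to multiplicative constants independent of $r$,
\[
\capp_{2,|t|^a}\bigl(F_r\times\{0\},\, B_{n+1}((x_0,0),2r)\bigr)\simeq \capBtwo(F_r,B(x_0,2r)).
\]
The upper bound follows from the trace characterization of $\Bstwo(\R^n)$ as the trace space of the weighted Sobolev space $H^{1,2}(\R^{n+1},|t|^a)$: for any admissible $V\in C^\infty_0(B_{n+1}((x_0,0),2r))$ with $V\ge1$ on $F_r\times\{0\}$, the trace $v(\cdot)=V(\cdot,0)$ lies in $C^\infty_0(B(x_0,2r))$, satisfies $v\ge1$ on $F_r$, and its Besov seminorm is bounded by $C\int|\grad V|^2|t|^a$. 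For the lower bound, given admissible $v\in C^\infty_0(B(x_0,2r))$ one takes its Caffarelli--Silvestre Poisson extension $V$ and multiplies by a radial cut-off equal to $1$ on $B_{n+1}((x_0,0),2r)$ and supported in a ball of comparable radius; a direct weighted energy estimate then bounds the $(1,2,|t|^a)$-energy of the modified $V$ by a constant times the Besov seminorm of $v$. A bounded enlargement of the outer ball is absorbed into the Wiener integral by standard dyadic splitting. Combining Steps~1--3 yields~\eqref{eq-Wien-cond-intro}.

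The main obstacle is the lower bound in Step~3: one must quantitatively control the $H^{1,2}(\R^{n+1},|t|^a)$-energy of a compactly supported modification of the Poisson extension of a Besov test function by the Besov seminorm of the trace, in a strictly localized form. This is delicate because the Caffarelli--Silvestre extension is nonlocal in~$x$ and has no automatic compact $x$-support, so a cut-off must be inserted and the resulting commutator estimated via Maz'ya--Shaposhnikova-type perturbation bounds. Step~2 is classical but must be invoked in the generality needed here, namely for the possibly very thin obstacle $F$ lying in a hyperplane; this is precisely the reason for the standing assumption that $\R^n\setm\Om$ has positive $\Bstwo$-capacity, which is equivalent to positive $\capp_{2,|t|^a}$-capacity of $F$ in $\R^{n+1}$.
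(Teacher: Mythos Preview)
Your proposal is correct and follows essentially the same three-step route as the paper: invoke Theorem~\ref{thm-equiv-reg-intro} (your Step~1), apply the weighted Wiener criterion from Fabes--Jerison--Kenig/Heinonen--Kilpel\"ainen--Martio (your Step~2), and use the capacity comparison of Lemma~\ref{lem-equiv-cap-intro} together with the ball-capacity estimate $\cpa(B(z_0,r),B(z_0,2r))\simeq r^{n-2s}$ (your Step~3). The only noteworthy technical difference is in the localization for the lower bound in Step~3: the paper uses the mollifier extension~\eqref{eq-def-V} from Maz\cprime ya's book, whose support property~\eqref{eq-supp-V} lets one cut off with $\eta$ supported in $B(z_0,2r)$ directly (so no enlargement of the outer ball is needed), whereas you propose the Caffarelli--Silvestre Poisson extension followed by a cut-off on a larger ball and a dyadic absorption argument; both work, but the paper's choice avoids your final ``bounded enlargement'' step.
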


For regular boundary points $x_0\in\bdy\Om$, we obtain the pointwise
decay estimate
\begin{equation}   \label{eq-osc-intro}
\osc_{B(x_0,\rho)}u \le \osc_{F_{2R}}f + \Bigl( \osc_{\R^n\setm\Om}f \Bigr)
    \exp\biggl( - C\int_\rho^R \frac{\capBtwo(F_r,B(x_0,2r))}{r^{n-2s}}
         \,\frac{dr}{r} \biggr) 
\end{equation}
for some constant $C>0$ and all $0<\rho\le R<\infty$, see
Proposition~\ref{prop-decay-est} for a more precise formulation.
It follows that $u$ is H\"older continuous at $x_0$ whenever
 $\R^n\setm\Om$ has a \emph{corkscrew}
at $x_0$, in the sense that there are $0<c<1$ and $r_0>0$ such that 
for all $0<r\le r_0$, the set $F_r$
contains a ball of radius $cr$, cf.\ Lemma~\ref{lem-cpa-Brho}.

Decay estimates of the type~\eqref{eq-osc-intro} 
first appeared for solutions of $\Delta u=0$
in Maz\cprime ya~\cite{Maz-conf} (and for nonlinear equations in Maz$'$ya~\cite{Maz70}), 
where they were used to obtain the sufficiency part of the Wiener
criterion for such equations, as well as  
sufficient conditions for the H\"older continuity of the 
solutions at the boundary.

Regular points for the $\al$-fine potential theory ($\al=2s$) associated with Riesz
potentials were characterized by the Wiener criterion
in Bliedtner--Hansen~\cite[Corollary~V.4.17]{BliHan} 
and Landkof~\cite[Theorem~5.2]{Landkof}.
Proposition~VII.3.1 in \cite{BliHan} connects such potential-theoretic regular points
to regular boundary points for $\al$-harmonic functions in the sense
of our Definition~\ref{def-reg-frac} and~\cite[Chapter~VII.3]{BliHan}.

We instead use the extension results from
Caffarelli--Silvestre~\cite{CafSil} to derive the Wiener criterion 
\eqref{eq-Wien-cond-intro}
for the nonlocal equation~\eqref{eq-frac-Lap-intro} from the Wiener criterion
for local degenerate (weighted) divergence equations in $\R^{n+1}$.
This is the content of the following theorem, proved in Section~\ref{sect-Dir-bdy-reg}.
As a consequence, we also obtain the \emph{Kellogg property}, 
saying that the set of irregular
boundary points for equation~\eqref{eq-frac-Lap-intro} has zero
$\Bstwo$-capacity, see Corollary~\ref{cor-Kellogg}.

\begin{thm}   \label{thm-equiv-reg-intro}
Assume that $\R^n\setm\Om$ has positive $\Bstwo$-capacity. 
A boundary point $x_0\in\bdry\Om$ is regular for $\Om$ with
respect to the fractional equation~\eqref{eq-frac-Lap-intro} if and
only if $z_0:=(x_0,0)$ is regular for $G$ with respect to the weighted
equation~\eqref{eq-def-degen-eq}.
\end{thm}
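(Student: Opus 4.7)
The plan is to transfer boundary regularity back and forth across the Caffarelli--Silvestre extension. With $a=1-2s$, the weighted theory on $\R^{n+1}$ with weight $|t|^a$ and the correspondence between solutions of the fractional equation~\eqref{eq-frac-Lap-intro} in $\Om$ and solutions of the degenerate equation~\eqref{eq-def-degen-eq} on the associated domain
$G=\R^{n+1}\setm((\R^n\setm\Om)\times\{0\})$
have already been established in the earlier sections. Concretely, given continuous bounded data $f$ on $\R^n\setm\Om$ and the corresponding Perron solution $u$ of~\eqref{eq-frac-Lap-intro} in $\Om$ (extended by $f$ to all of $\R^n$), its even Caffarelli--Silvestre extension $\tilde u$ to $\R^{n+1}$ is the Perron solution of~\eqref{eq-def-degen-eq} in $G$ with boundary values $f$ on $\bdy G=(\R^n\setm\Om)\times\{0\}$, and satisfies $\tilde u(x,0)=u(x)$ on the whole hyperplane $\R^n\times\{0\}$.

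For the forward implication I assume $z_0=(x_0,0)$ is regular for $G$. Then for every continuous bounded $f$, $\tilde u(z)\to f(x_0)$ as $G\ni z\to z_0$; restricting this limit to the subset $\Om\times\{0\}\subset G$ and using $\tilde u(x,0)=u(x)$ for $x\in\Om$ yields $u(x)\to f(x_0)$, so $x_0$ is regular for $\Om$. The reverse implication is the substantive half. Assuming $x_0$ is regular for $\Om$, I would verify continuity of $\tilde u$ at $z_0$ by splitting the approach $G\ni z\to z_0$ into the slice $\Om\times\{0\}$ (where the limit is immediate from the regularity of $x_0$ for $\Om$) and the set $\{t\ne 0\}$. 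For the latter I would invoke the Poisson-type representation
\begin{equation*}
\tilde u(x,t)=c_{n,s}|t|^{2s}\int_{\R^n}\frac{u(y)}{(|x-y|^2+t^2)^{(n+2s)/2}}\,dy,
\end{equation*}
whose kernel is a nonnegative approximate identity as $|t|\to 0$. Since $u$ is bounded on $\R^n$ and, by combining regularity of $x_0$ for $\Om$ with continuity of $f$ at $x_0$, tends to $f(x_0)$ along $\R^n\ni y\to x_0$, a standard split of the integral into a small $\R^n$-ball around $x_0$ and its complement delivers $\tilde u(x,t)\to f(x_0)$.

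The main obstacle is the reverse direction, and within it the upgrade from continuity of $u$ at $x_0$ inside $\Om$ to continuity of the glued function $u$ at $x_0$ viewed as a point of $\R^n$: this rests on continuity of the prescribed data $f$ on $\R^n\setm\Om$ at $x_0$, together with the identity $u\equiv f$ on $\R^n\setm\Om$. Once that pointwise continuity on $\R^n$ is established, the Poisson-kernel approximate-identity argument is routine, and the two implications together yield the equivalence asserted in the theorem.
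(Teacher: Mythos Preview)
Your forward implication matches the paper's exactly. The reverse implication is where the approaches diverge, and yours has a gap.

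You assert that the even Caffarelli--Silvestre (Poisson) extension $\tilde u$ of $u$ \emph{is} the Perron solution $P_Gf$ in $G$, and that this ``has already been established in the earlier sections''. It has not. What the paper establishes is the restriction $u(x):=P_Gf(x,0)$; it never proves that extending $u$ back via the Poisson kernel recovers $P_Gf$ off the hyperplane. That identification would require a uniqueness or representation theorem for bounded solutions of the weighted equation in the half-space $\{t>0\}$: you would need that $P_Gf$ and $\tilde u$, both bounded solutions in $\{t>0\}$, agree because they share boundary traces. But the trace of $P_Gf$ on $(\R^n\setm\Om)\times\{0\}$ is only known q.e.\ via the Kellogg property, and the Liouville/Phragm\'en--Lindel\"of step (bounded solution in $\{t>0\}$ with q.e.\ zero boundary data vanishes) is neither in the paper nor supplied by you. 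Without $\tilde u=P_Gf$, the approximate-identity argument tells you about $\tilde u$, not about $P_Gf$, and it is $P_Gf$ whose limit at $z_0$ you must control to conclude regularity for $G$.

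The paper avoids this entirely by arguing the contrapositive and localizing. Assuming $z_0$ is irregular for $G$, it picks $f$ for which $P_Gf$ fails the limit, assumes for contradiction that $u=P_Gf(\cdot,0)$ still attains $f(x_0)$, and then restricts $P_Gf$ to a half-ball $\Bplus$ centred at $z_0$. On $\bdy\Bplus$ the glued function $\ft$ (equal to $f$ on $\bdy G$ and to $P_Gf$ in $G$) is bounded and, by the contradiction hypothesis, continuous at $z_0$. A short sub/supersolution comparison shows that $P_Gf|_{\Bplus}$ is the Perron solution in $\Bplus$ with data $\ft$. Since $z_0$ is regular for the half-ball by the corkscrew condition \cite[Theorem~6.31]{HeKiMa}, Lemma~9.6 in \cite{HeKiMa} forces $P_Gf\to f(x_0)$ from $\Bplus$ (and similarly from $\Bminus$), contradicting irregularity. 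This route uses only local tools already available in \cite{HeKiMa} and never needs the global Poisson representation.
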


Our condition~\eqref{eq-Wien-cond-intro} differs somewhat from the ones
in~\cite{BliHan} and~\cite{Landkof} in that we use the variational  (condensor)
capacity~\eqref{eq-Bes-cap-intro} and that our sets $F_r$ are defined
using balls rather than annuli, cf.\ \cite[(5.1.7)]{Landkof}.
In addition to the  Caffarelli--Silvestre extension, our results are
based on the following comparison between 
the Besov capacity and a weighted condensor capacity in $\R^{n+1}$,
see Lemma~\ref{lem-comp-cpa-capB} and Section~\ref{sect-cap}.

\begin{lem}   \label{lem-equiv-cap-intro}
Let $E\subset \itoverline{B(x_0,r)} \subset\R^n$ be a Borel set 
and $z_0=(x_0,0)\in\R^{n+1}$. 
Then for $p>1$  and $0<s<1$, 
\begin{equation*}   
\capB(E,B(x_0,2r))  \simeq  \cpp\bigl(E\times\{0\},B(z_0,2r)\bigr),
\quad \text{where } a=p(1-s)-1,
\end{equation*}
and $\cpp$ is the condensor capacity   associated with the weight
$w(x,t)=|t|^{a}$ in $\R^n\times \R$.
\end{lem}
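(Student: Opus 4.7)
The plan is to translate admissible functions between the two capacities using the trace/extension theory for the weighted Sobolev space $W^{1,p}(\R^{n+1}, |t|^a\, dz)$. The key analytic fact is that, for $a = p(1-s) - 1 \in (-1, p-1)$, this space has trace space $B^s_p(\R^n)$ on $\R^n \times \{0\}$, realized by a bounded trace inequality
\[
\|\phi(\cdot,0)\|_{B^s_p(\R^n)}^p \le C \int_{\R^{n+1}} |\nabla \phi|^p |t|^a \, dz,
\]
and by a bounded extension operator $\mathcal{E}$ (realized, for example, by the Caffarelli--Silvestre extension with the fractional Poisson kernel, symmetrized across $t = 0$) satisfying $\mathcal{E}v(\cdot, 0) = v$ and $\int_{\R^{n+1}} |\nabla \mathcal{E}v|^p |t|^a \, dz \le C\|v\|_{B^s_p(\R^n)}^p$. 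These two estimates are by now classical and will be the workhorses.

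For the direction $\capB(E, B(x_0, 2r)) \le C\, \cpp(E \times \{0\}, B(z_0, 2r))$, I would take any admissible $\phi \in C_0^\infty(B(z_0, 2r))$ for the condenser capacity and set $v(x) := \phi(x, 0)$. Then $v \in C_0^\infty(B(x_0, 2r))$ with $v \ge 1$ on $E$, so $v$ is admissible for the Besov capacity. Applying the trace inequality to $\phi$ and taking the infimum over admissible $\phi$ yields this direction.

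For the reverse direction, I would begin with admissible $v \in C_0^\infty(B(x_0, 2r))$ and set $u := \mathcal{E}v$. Fix a cutoff $\eta \in C_0^\infty(B(z_0, 2r))$ with $\eta \equiv 1$ on $\overline{B(z_0, r)}$ and $|\nabla \eta| \le C/r$; since $E \subset \overline{B(x_0, r)}$ gives $E \times \{0\} \subset \overline{B(z_0, r)}$, the product $\phi := \eta u$ is compactly supported in $B(z_0, 2r)$ and has trace $v \ge 1$ on $E$. By the density of $C_0^\infty$ in the weighted Sobolev space (the weight $|t|^a$ lies in the Muckenhoupt class $A_p$ for $a \in (-1, p-1)$), we may treat $\phi$ as admissible for $\cpp$. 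Leibniz's rule gives
\[
\int |\nabla \phi|^p |t|^a\, dz \le C \|v\|_{B^s_p}^p + C r^{-p}\int_{B(z_0, 2r) \setminus B(z_0, r)} |u|^p |t|^a\, dz,
\]
and the main obstacle is controlling the second integral by $C r^p \|v\|_{B^s_p}^p$. I expect this to follow from the pointwise decay of the fractional Poisson kernel away from $\supp v \subset B(x_0, 2r)$ combined with H\"older's inequality, or alternatively from replacing $v$ by the admissible truncation $v \wedge 1$ and applying a weighted Poincar\'e inequality on $B(z_0, 2r)$ (with the mean absorbed by a further normalization step). Combining the two directions then yields $\cpp(E \times \{0\}, B(z_0, 2r)) \le C \capB(E, B(x_0, 2r))$, completing the equivalence.
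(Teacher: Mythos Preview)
Your first direction (restrict an admissible $\phi$ to $t=0$ and invoke the trace inequality) is exactly what the paper does.

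The second direction is where your proposal diverges, and where the gap lies. The paper does \emph{not} use the Caffarelli--Silvestre (Poisson) extension; it uses the mollification extension
\[
V(x,t)=\frac{1}{t^n}\int_{\R^n}\phi\Bigl(\frac{\xi-x}{t}\Bigr)v(\xi)\,d\xi
\]
from Maz\cprime ya's book. The reason for this choice is the support property: since $\phi$ vanishes outside $B(0,1)$, one has $V(x,t)=0$ whenever $|x-x_0|\ge 2r+t$. Thus $\spt V$ meets $B:=B(z_0,3r)$ only in a thin conical region, and H\"older's inequality gives
\[
|V_B|\le\biggl(\frac{\mu_a(B\cap\spt V)}{\mu_a(B)}\biggr)^{1-1/p}\biggl(\vint_B|V|^p\,d\mu_a\biggr)^{1/p}
=\theta\biggl(\vint_B|V|^p\,d\mu_a\biggr)^{1/p}
\]
with a fixed $\theta<1$. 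Subtracting this from the Minkowski bound $(\vint_B|V|^p)^{1/p}\le(\vint_B|V-V_B|^p)^{1/p}+|V_B|$ and applying the weighted Poincar\'e inequality for $\mu_a$ yields $\int_B|V|^p\,d\mu_a\simle r^p\int_B|\grad V|^p\,d\mu_a$, which is precisely the missing piece in your Leibniz estimate.

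Your Poisson extension $u=\mathcal{E}v$ has no such support property---it is nonzero everywhere---so this mean-absorption step fails for it, and neither of your suggested fixes works as written. The annulus $B(z_0,2r)\setm B(z_0,r)$ is \emph{not} far from $\spt v\times\{0\}$ (take $|x-x_0|$ near $2r$ and $|t|$ small), so ``decay of the Poisson kernel away from $\spt v$'' does not apply. And truncating $v$ to $v\wedge1$ does nothing to shrink the support of $\mathcal{E}v$, so the Poincar\'e-with-mean argument remains blocked. A genuine rescue along your lines is possible: combine Young's inequality $\|u(\cdot,t)\|_{L^p}\le\|v\|_{L^p}$ with the fractional Poincar\'e inequality $\|v\|_{L^p}^p\simle r^{sp}\|v\|_{\Bsp}^p$ for $v\in C^\infty_0(B(x_0,2r))$; the exponents then match exactly since $a+1-p+sp=0$. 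But that is not what you proposed.

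One further omission: in the paper both capacities are defined first for compact $K$ via $C^\infty_0$ test functions and then extended to Borel $E$ by inner regularity. The comparison is proved for compact $K$ and passed to Borel $E$ afterwards; you should make that reduction explicit.
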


Condition~\eqref{eq-Wien-cond-intro} is a fractional analogue of the
famous Wiener criterion proved for the Laplace equation
(i.e.\ \eqref{eq-frac-Lap-intro} with $s=1$) by Wiener~\cite{Wiener-crit} in 1924.
The Wiener criterion has been extended to various linear and nonlinear elliptic equations by
e.g.\ Littman--Stampacchia--Weinberger~\cite{LiStWe},
Maz$'$ya~\cite{Maz70}, Gariepy--Ziemer~\cite{GaZi},
Dal Maso--Mosco~\cite{DalMasoMosco86}, \cite{DalMasoMosco87}, 
Lindqvist--Martio~\cite{LinMar} and Kil\-pe\-l\"ai\-nen--Mal\'y~\cite{KilMaly}.
A weighted version of the Wiener criterion for degenerate
elliptic equations, which will be of great importance in this paper,
was proved by Fabes--Jerison--Kenig~\cite{FaJeKe},
Heinonen--Kilpel\"ainen--Martio~\cite{HeKiMa} and Mikkonen~\cite{Mikkonen}.

Similar sufficient conditions were obtained for the boundary
continuity of solutions with zero boundary data for the nonhomogeneous
polyharmonic equation
\[
(-\Delta)^mu=f\in C^\infty_0(\Om)
\]
with some integer powers
$m\ge2$, see Maz$'$ya~\cite{MazBiharm}, \cite{MazPoly} and
Maz$'$ya--Donchev~\cite{MazDonchev}.
Condition~\eqref{eq-Wien-cond-intro} (with a slightly different capacity) was
shown to guarantee boundary continuity of solutions with zero boundary
data for the nonhomogeneous fractional Laplace equation 
\[
(-\Delta)^su=f\in C^\infty_0(\Om) \quad \text{with } 
s\in(0,1)\cup(\tfrac{n}{2}-1,\tfrac{n}{2}),
\] 
see Eilertsen~\cite{Eilert}.
Estimates similar to~\eqref{eq-osc-intro} were also proved in the above papers 
\cite{Eilert},  \cite{MazBiharm}, \cite{MazPoly} and~\cite{MazDonchev}
on fractional and higher order equations,  but
necessity does not seem to have been considered there.
For an extensive exposition of results on boundary regularity and the
Wiener criterion for a wide class of elliptic equations, see the monograph by
Maz$'$ya~\cite{MazEMSBook}.

In the literature, the Dirichlet problem for $(-\Delta)^s$
is often considered in smooth (or at least Lipschitz) domains with
zero boundary data and for the nonhomogeneous equation
$(-\Delta)^su=f$. 
Formally and for suffiently smooth data, this formulation and the one
considered in this paper (with zero 
right-hand side and general boundary data) can be transformed into
each other, see e.g.\ Hoh--Jacob~\cite[Section~5]{HohJac} and Ros-Oton~\cite[Section~7]{RosPubMat}.

The boundary Harnack inequality for the fractional Laplacian was proved by Bogdan~\cite{Bogdan} and
Bogdan--Kulczycki--Kwa\'{s}nicki~\cite{BogKulKwa}.
Optimal regularity up to the boundary was in sufficiently smooth
bounded domains (Lipschitz or $C^{1,1}$) proved by
Ros-Oton--Serra~\cite{RosSer}, \cite{RosSerARMA} for solutions of
$(-\Delta)^su=f$ with zero boundary data  and various right-hand sides~$f$.

In this paper, we treat general open sets (with complements of
positive capacity).
Our approach to boundary regularity is based on the following 
result due to Caffarelli--Silvestre~\cite[Section~4]{CafSil}:
The solution
$u$ of \eqref{eq-frac-Lap-intro} in $\Om$ with $u=f$ on 
$F:=\R^n\setm\Om$ 
coincides with the restriction 
\[
u(x):=U(x,0)
\] 
of the solution $U$ to the Dirichlet problem in 
\begin{equation}   \label{def-G}
G:=\R^{n+1}\setm(F\times\{0\})
\end{equation}
 for the weighted equation
\begin{equation}   \label{eq-def-degen-eq}
\Div(|t|^{1-2s}\grad U(x,t))=0   
\end{equation}
with boundary data $f$ on $\bdy G=(F\times\{0\})\cup\{\infty\}$.

The above relation between~\eqref{eq-frac-Lap-intro}
and~\eqref{eq-def-degen-eq} was used in~\cite{CafSil} to derive the
Harnack and boundary Harnack inequalities for~\eqref{eq-frac-Lap-intro}. 
In particular, the local H\"older continuity of $U$ in $G$, proved in
e.g.\ Fabes--Kenig--Serapioni~\cite{FKS} 
or Heinonen--Kilpel\"ainen--Martio~\cite[Theorem~6.6]{HeKiMa},
directly yields interior regularity for the solutions of the fractional
equation~\eqref{eq-frac-Lap-intro}.
Since then, the lift from~\eqref{eq-frac-Lap-intro} to~\eqref{eq-def-degen-eq}  
has become a standard tool in the analysis of the fractional Laplacian.
It has been successfully exploited by many authors in various
contexts, including interior regularity and free boundaries, see e.g.\ 
Aimar--Beltritti--G\'omez~\cite{AimBelGom},
Barrios--Figalli--Ros-Oton~\cite{BarFigRos},
Caffarelli--Roquejoffre--Sire~\cite{CafRoqSire},
Caffarelli--Salsa--Silvestre~\cite{CafSalSilFreeObst},
Koch--R\"uland--Shi~\cite{KoRuShi} and  Silvestre~\cite{Silv}.

\begin{ack}
The author was supported by the Swedish Research Council,
grant 2018-04106.
\end{ack}

\section{Weights, capacities and degenerate equations}

\label{sect-cap}

In this section we discuss properties of the degenerate
equation~\eqref{eq-def-degen-eq} and its associated weighted capacity.
More generally, we let $p>1$ and consider the weight $w(x,t)=|t|^a$ in $\R^{n+1}$
with $-1<a<p-1$. 
Even though we only need the case $p=2$ in the rest of this paper, we
state and prove the general results in this section for all $p>1$, since the
arguments are the same as for $p=2$ and may be of independent interest. 

It can be verified by a direct calculation that $w$ is a Muckenhoupt $A_p$ weight on
$\R^{n+1}$, i.e.\ it satisfies for all balls $B\subset\R^{n+1}$,
\[
\int_B w(x,t)\,dx\,dt \biggl( \int_B w(x,t)^{1/(1-p)}\,dx\,dt \biggr)^{p-1}
    \le C |B|^p,
\]
where $|B|$ stands for the $(n+1)$-dimensional Lebesgue measure of $B$
and $C>0$ is independent of $B$.
It follows that $w$ is admissible for the theory of degenerate
elliptic equations in the sense of
Fabes--Jerison--Kenig~\cite{FaJeKe}  ($p=2$) or 
Heinonen--Kilpel\"ainen--Martio~\cite[Chapter~20]{HeKiMa} ($p>1$).

In~\cite{FaJeKe}, the arguments are restricted to a large fixed ball
$\Sigma =\{z:|z|<R\}$. 
Since we later deal with the unbounded domain $G$, given by~\eqref{def-G},
it will be convenient to use~\cite{HeKiMa} even for $p=2$.
Equation~\eqref{eq-def-degen-eq} satisfies the assumptions
(3.3)--(3.7) in~\cite{HeKiMa} with $p=2$ and the tools therein are
therefore at our disposal.

Here and in what follows, we use the notation $z=(x,t)\in\R^n\times\R=\R^{n+1}$
and define the measure $\mu_a$ on $\R^{n+1}$ by
\[
d\mu_a(z) = |t|^a\,dx\,dt.
\]
It follows from \cite[p.~307]{HeKiMa} that $\mu_a$ supports the
$(p,p)$-Poincar\'e inequality
\begin{equation}   \label{eq-(p,p)-PI}
\int_B |v-v_B|^p\,d\mu_a \le C r^p \int_B |\grad v|^p\,d\mu_a,
\end{equation}
whenever $B=B(z,r)\subset\R^{n+1}$ 
is a ball and $v\in C^\infty(B)$ is bounded.
Here 
\[
v_B:= \vint_B v\,d\mu_a := \frac{1}{\mu_a(B)} \int_B v\,d\mu_a
\]
is the integral average of $v$ and $C>0$ is independent of $B$.
Since we consider balls both in $\R^{n+1}$ and $\R^n$, we adopt the convention 
that the dimension of a ball is determined by its centre, i.e.\ 
for $z\in\R^{n+1}$ and $x\in\R^{n}$,
\[
B(z,r):=\{y\in\R^{n+1}:|y-z|<r\} \quad \text{and} \quad
 B(x,r):=\{y\in\R^{n}:|y-x|<r\}.
\]
Unless specified otherwise, we consider open balls.

The following definition of capacity follows \cite[Chapter~2]{HeKiMa}.

\begin{deff}  \label{def-cpa}
Let $B\subset \R^{n+1}$ be a ball and $K\subset B$ be a compact set.
The \emph{weighted variational {\rm(}condensor\/{\rm)} capacity} of $K$ with respect to $B$ is
\[
\cpp(K,B) = \inf_v \int_{B} |\grad v|^p \, \,d\mu_a,
\]
where the infimum is taken over all $v\in C^\infty_0(B)$ such that
$v\ge1$ on $K$.
\end{deff}

The weighted capacity $\cpp$ extends to all subsets of $B$ as a Choquet
capacity and in particular, for all Borel sets $E\subset B$,
\[
\cpp(E,B) = \sup_{\text{compact }K\subset E} \cpp(K,B).
\]
A set $E\subset\R^{n+1}$ is said to be of zero $\cpp$-capacity if for all
balls $B\subset \R^{n+1}$,
\[
\cpp(E\cap B,B)=0.
\] 
In \cite{HeKiMa}, the variational capacity $\cpp$ is used to
characterize boundary regularity for weighted equations of \p-Laplace
type and in particular for the equation~\eqref{eq-def-degen-eq}.

Our aim is to formulate the Wiener criterion in terms of
a capacity associated with the Besov space $\Bsp(\R^n)$ and the
fractional equation~\eqref{eq-frac-Lap-intro}.
Following Maz$'$ya~\cite[p.~512]{MazSobBook}, we define the Besov seminorm
\[
\|v\|_{\Bsp(\R^n)} := 
   \biggl( \int_{\R^n} \int_{\R^n}
   \frac{|v(x)-v(y)|^p}{|x-y|^{n+sp}}\,dx\,dy \biggr)^{1/p}, \quad 0<s<1<p.
\]
Theorem~1 in \cite[p.~512]{MazSobBook} 
asserts that for all $v\in C^\infty_0(\R^n)$,
\begin{equation}   \label{eq-comp-Maz-seminorms}
\|v\|_{\Bsp(\R^n)} 
\simeq \inf_{\vt} \bigl\| |t|^{1-s-1/p} \grad \vt \bigr\|_{L^p(\R^{n+1})},
\end{equation}
where the infimum is taken over all extensions $\vt\in
C^\infty_0(\R^{n+1})$ of $v$.
Moreover, it follows from the proof of~\cite[Theorem~1 on p.~512]{MazSobBook} that
\begin{equation}   \label{eq-comp-Maz-extension}
\|v\|_{\Bsp(\R^n)} 
\simeq \bigl\| |t|^{1-s-1/p} \grad V \bigr\|_{L^p(\R^{n+1})},
\end{equation}
where the extension $V$ is defined by
\begin{equation}   \label{eq-def-V}
V(x,t) = \frac{1}{t^n} \int_{\R^n} \phi \Bigl( \frac{\xi-x}{t} \Bigr) v(\xi)\,d\xi,
\end{equation}
with any $0\le\phi\in C^\infty(\R^n)$ vanishing outside $B(0,1)$, such that
\[
\int_{\R^n} \phi(\xi)\,d\xi=1 \quad \text{and} \quad \phi(-\xi)=\phi(\xi),
\]
see~\cite[(10.1.7) on p.~514]{MazSobBook}.
Here, and in what follows, we use the notation $X\simeq Y$ if there is
a positive constant $C$ independent of $X$ and $Y$ such that $X/C\le Y \le CX$.
Similar one-sided inequalities are denoted $\simle$ and $\simge$ in an
obvious way.

The following lemma relates the weighted capacity $\cpp$ in $\R^{n+1}$
to a Besov type condensor capacity in $\R^n$, see Definition~\ref{def-capB}.
For simpler notation, we identify $K\subset\R^n$ with $K\times\{0\}\subset\R^{n+1}$.

\begin{lem}  \label{lem-comp-cpa-capB}
Let $z_0=(x_0,0)\in\R^{n+1}$ and 
$K\subset \itoverline{B(x_0,r)}\subset\R^n$ be a compact set.
Then
\[
\cpp(K,B(z_0,2r)) \simeq \inf_v \|v\|^p_{\Bsp(\R^n)},
\quad \text{where } s=1-\frac{1}{p}-\frac{a}{p},
\]
and the infimum is taken over all $v\in C^\infty_0(B(x_0,2r))$
such that $v\ge1$ on~$K$.
\end{lem}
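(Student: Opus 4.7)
The strategy is to pair admissible test functions for $\cpp$ on $\R^{n+1}$ with those for the Besov capacity on $\R^n$ by restricting to $\R^n\times\{0\}$ in one direction and extending via the Maz$'$ya operator \eqref{eq-def-V} in the other. Since $a/p=1-s-1/p$ under the hypothesis, the weighted energy $\int_{\R^{n+1}}|t|^a|\grad\vt|^p\,dz$ defining $\cpp$ is exactly $\||t|^{1-s-1/p}\grad\vt\|_{L^p(\R^{n+1})}^p$, the right-hand side of \eqref{eq-comp-Maz-seminorms} and \eqref{eq-comp-Maz-extension}. This alignment is what drives the whole equivalence.

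For the lower bound, take any admissible $\vt\in C^\infty_0(B(z_0,2r))$ for $\cpp(K,B(z_0,2r))$. Since $B(z_0,2r)\cap\R^n=B(x_0,2r)$, the trace $v(x):=\vt(x,0)$ lies in $C^\infty_0(B(x_0,2r))$ and satisfies $v\ge 1$ on $K$, so it is admissible for the Besov capacity. Because $\vt$ is itself a compactly supported smooth extension of $v$, \eqref{eq-comp-Maz-seminorms} directly gives
\[
\|v\|_{\Bsp(\R^n)}^p\le C\int_{\R^{n+1}}|t|^a|\grad\vt|^p\,dz,
\]
and taking the infimum over $\vt$ yields $\inf_v\|v\|_{\Bsp}^p\lesssim\cpp(K,B(z_0,2r))$.

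For the upper bound, start with admissible $v\in C^\infty_0(B(x_0,2r))$ and form the Maz$'$ya extension $V$ from \eqref{eq-def-V}. By \eqref{eq-comp-Maz-extension}, $\int|t|^a|\grad V|^p\,dz\lesssim\|v\|_{\Bsp}^p$, and since $\phi$ is supported in $B(0,1)$, the convolution structure forces $V(\cdot,t)$ to vanish outside $B(x_0,2r+|t|)$. Pick a cutoff $\chi\in C^\infty_0((-r/2,r/2))$ with $\chi\equiv 1$ near $0$ and $|\chi'|\le C/r$, and set $\tilde u:=\chi(t)V(x,t)$. Then $\tilde u\in C^\infty_0(B(z_0,3r))$, $\tilde u(\cdot,0)=v\ge 1$ on $K$, and Leibniz gives
\[
\int|t|^a|\grad\tilde u|^p\,dz\le C\int|t|^a|\grad V|^p\,dz + \frac{C}{r^p}\int_{|t|\sim r}\int_{\R^n}|V|^p|t|^a\,dx\,dt.
\]
The first term is $\lesssim\|v\|_{\Bsp}^p$. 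For the second, Young's convolution inequality gives $\|V(\cdot,t)\|_{L^p(\R^n)}\le\|v\|_{L^p(\R^n)}$ for every $t$, and a fractional Poincar\'e estimate $\|v\|_{L^p(\R^n)}^p\lesssim r^{sp}\|v\|_{\Bsp}^p$---obtained by restricting the double integral in \eqref{eq-Bes-cap-intro} to $x\in B(x_0,2r)$ and $y$ at distance $\gtrsim r$, exploiting that $v\equiv 0$ outside $B(x_0,2r)$---controls $\|v\|_{L^p}$. Together with $\int_{|t|\sim r}|t|^a\,dt\simeq r^{a+1}$ and the identity $sp=p-1-a$, the cross term is of order $r^{-p+a+1+sp}\|v\|_{\Bsp}^p=\|v\|_{\Bsp}^p$, yielding $\cpp(K,B(z_0,3r))\lesssim\|v\|_{\Bsp}^p$. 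A routine capacity comparison---multiply a near-minimizer on $B(z_0,3r)$ by a cutoff equal to $1$ on $B(z_0,3r/2)$ and vanishing outside $B(z_0,2r)$, and bound the cutoff error using the weighted Friedrichs inequality coming from \eqref{eq-(p,p)-PI}---upgrades this to $\cpp(K,B(z_0,2r))\lesssim\|v\|_{\Bsp}^p$.

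The main obstacle will be the cross term produced by truncating $V$ in $t$: one needs the clean scaling $\|v\|_{L^p}^p\lesssim r^{sp}\|v\|_{\Bsp}^p$ together with the identity $sp=p-1-a$ so that the powers of $r$ telescope away. Without this exact match built into the hypothesis, the truncation would leave a spurious factor of $r$ and the two capacities would not be comparable.
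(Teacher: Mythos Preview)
Your proof is correct; the lower bound is identical to the paper's, while the upper bound follows a genuinely different route. The paper cuts $V$ off with a radial $\eta\in C^\infty_0(B(z_0,2r))$ and controls the cross term $r^{-p}\int_{B(z_0,2r)}|V|^p\,d\mu_a$ via the weighted $(p,p)$-Poincar\'e inequality \eqref{eq-(p,p)-PI} on the larger ball $B(z_0,3r)$, together with the observation \eqref{eq-supp-V} that $V$ vanishes on a fixed positive fraction of $B(z_0,3r)$; this yields a constant $\theta<1$ that allows the term $\int|V|^p\,d\mu_a$ to be absorbed into the left-hand side. Your argument instead cuts only in the $t$ variable, bounds $\|V(\cdot,t)\|_{L^p(\R^n)}\le\|v\|_{L^p(\R^n)}$ by Young's inequality, and then invokes the elementary fractional Poincar\'e estimate $\|v\|_{L^p}^p\lesssim r^{sp}\|v\|_{\Bsp}^p$, letting the exponent identity $sp=p-1-a$ cancel all powers of $r$. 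The price is an extra step comparing $\cpp(K,B(z_0,3r))$ with $\cpp(K,B(z_0,2r))$, which itself needs a weighted Friedrichs inequality; had you used a full $(n{+}1)$-dimensional cutoff as the paper does, your Young + fractional-Poincar\'e estimate would have landed directly in $B(z_0,2r)$ and this step would disappear. In short, the paper's approach stays entirely on the $\R^{n+1}$ side and uses only the Poincar\'e inequality for $\mu_a$, while yours makes the scaling completely explicit and relies only on elementary facts about convolutions and the Besov seminorm; both reach the same conclusion.
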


\begin{proof}
Note that $a/p = 1-s-1/p$ coincides with the exponent in
\eqref{eq-comp-Maz-seminorms} and \eqref{eq-comp-Maz-extension}.
Let $\vt\in C^\infty_0(B(z_0,2r))$ be admissible in the definition of $\cpp(K,B(z_0,2r))$
and set $v(x):=\vt(x,0)$ for $x\in\R^n$.
It then follows from~\eqref{eq-comp-Maz-seminorms} and the definition
of $\mu_a$ that
\[
\int_{B(z_0,2r)} |\grad \vt|^p \, d\mu_a \simge \|v\|_{\Bsp(\R^n)}.
\]
Since $v\in C^\infty_0(B(x_0,2r))$  and $v\ge1$ on $K$,
taking infimum over all such functions $\vt$ gives the $\simge$-inequality 
in the statement of the lemma.

For the reverse inequality, let $v\in C^\infty_0(B(x_0,2r))$ 
be such that $v\ge1$ on~$K$.
Since $\phi$ in~\eqref{eq-def-V} vanishes outside $B(0,1)$, it is easily verified that
the extension $V$, given by~\eqref{eq-def-V}, satisfies 
\begin{equation}  \label{eq-supp-V}
V(x,t)=0 \quad \text{whenever}  \quad  |x-x_0|\ge 2r+t.
\end{equation}
To estimate $\cpp(K,B(z_0,2r))$, let
$\eta\in C^\infty_0(B(z_0,2r))$ be  a cut-off function such that $0\le\eta\le1$ in
$\R^{n+1}$, $\eta=1$ in $B(z_0,r)$ and $|\grad\eta|\le 2/r$.
We then have
\begin{align}
\cpp(K,B(z_0,2r)) &\le \int_{B(z_0,2r)} |\grad (V\eta)|^p \, d\mu_a
\label{eq-est-with-V-eta}  \\ 
&\le 2^p \int_{B(z_0,2r)} |\grad V|^p \, d\mu_a  
    + \frac{4^p}{r^p} \int_{B(z_0,2r)} |V|^p \, d\mu_a. 
\nonumber
\end{align}
In view of~\eqref{eq-comp-Maz-extension}, it therefore suffices to
estimate the last term using the first integral on the right-hand side.
To this end, we let $B=B(z_0,3r)$ and 
$V_B:=\vint_{B}V \, d\mu_a$.  
The Minkowski inequality then yields
\begin{equation}   \label{eq-est-with-V-B}
   \biggl( \vint_{B} |V|^p \, d\mu_a \biggr)^{1/p} 
   \le \biggl( \vint_{B} |V-V_B|^p \, d\mu_a \biggr)^{1/p} + |V_B|. 
\end{equation}
Note that, by the H\"older inequality and~\eqref{eq-supp-V},
\begin{align*}
|V_B| \le  \vint_{B}|V|\,d\mu_a
   &\le \biggl( \vint_{B}|V|^p \,d\mu_a \biggr)^{1/p} 
      \biggl( \frac{\mu_a(B \cap\spt V)}{\mu_a(B)} \biggr)^{1-1/p} \\
   &\le \theta \biggl( \vint_{B}|V|^p \,d\mu_a \biggr)^{1/p},
\end{align*}
where the constant $0<\theta<1$ depends only on $n$, $p$ and $a$.
Inserting the last estimate into~\eqref{eq-est-with-V-B}
and subtracting the last term from the left-hand side, we get
\[
(1-\theta) \biggl( \int_{B} |V|^p \, d\mu_a \biggr)^{1/p} 
   \le \biggl( \int_{B} |V-V_B|^p \, d\mu_a \biggr)^{1/p}.
\]
Together with~\eqref{eq-est-with-V-eta} and the $(p,p)$-Poincar\'e
inequality~\eqref{eq-(p,p)-PI} for $\mu_a$, this implies that
\begin{align*}
\cpp(K,B(z_0,2r)) &\le 2^p \int_{B(z_0,2r)} |\grad V|^p \, d\mu_a  
    + \frac{4^p C}{(1-\theta)^p} \int_{B} |\grad V|^p \, d\mu_a   \\
&\simle \int_{\R^{n+1}} |\grad V|^p \, d\mu_a.
\end{align*}
The comparison~\eqref{eq-comp-Maz-extension} now concludes the proof.
\end{proof}

In view of Lemma~\ref{lem-comp-cpa-capB} and Definition~\ref{def-cpa}, 
we make the following definition.

\begin{deff}  \label{def-capB}
Let $B\subset \R^n$ be a ball and $K\subset B$ be a compact set.
The \emph{variational {\rm(}condensor\/{\rm)} Besov capacity} of $K$ with respect to $B$ is
\[
\capB(K,B) = \inf_v \|v\|^p_{\Bsp(\R^n)},
\]
where the infimum is taken over all $v\in C^\infty_0(B)$ such that $v\ge1$ on $K$.
For a Borel set $E\subset B$, we let
\[
\capB(E,B) = \sup_{\text{compact }K\subset E} \capB(K,B).
\]
We also say that a Borel set $E\subset\R^n$ has zero $\Bsp$-capacity if 
\[
\capB(E\cap B,B)=0 \quad \text{for all balls $B\subset \R^n$.}
\] 
\end{deff}

Lemma~\ref{lem-comp-cpa-capB} now implies that whenever 
$E\subset \itoverline{B(x_0,r)} \subset\R^n$ is a Borel set and $z_0=(x_0,0)\in\R^{n+1}$,
\begin{equation}    \label{eq-comp-cpa-capB}
\cpp(E,B(z_0,2r)) \simeq \capB(E,B(x_0,2r)),
\quad \text{where } s=1-\frac{1}{p}-\frac{a}{p}.
\end{equation}
This proves Lemma~\ref{lem-equiv-cap-intro}.
In particular, \eqref{eq-comp-cpa-capB} holds for the sets 
$F_r$ in Theorem~\ref{thm-frac-Wiener-Besov}.
Moreover, a Borel subset of $\R^n$ has zero $\Bsp$-capacity if and
only if it has zero $\cpp$-capacity.

\begin{lem}  \label{lem-cpa-Brho}
Let $0<\rho\le r$, $z_0=(x_0,0)$ and $s=1-1/p-a/p$. Then 
\begin{align*}    
\cpp(B(z_0,\tfrac12\rho),B(z_0,2r)) &\simle \capB(\itoverline{B(x_0,\rho)},B(x_0,2r)) \\
&\simle \cpp(B(z_0,\rho),B(z_0,2r)).
\end{align*}
In particular, $\capB(\itoverline{B(x_0,cr)},B(x_0,2r)) \simeq r^{n-ps}$ with
comparison constants depending only on $n$, $p$, $s$ and\/ $0<c\le1$.
\end{lem}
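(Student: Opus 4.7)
The strategy is to combine the capacity comparison~\eqref{eq-comp-cpa-capB} from Lemma~\ref{lem-comp-cpa-capB} with two direct test-function arguments that mirror the two halves of the proof of that lemma: a restriction to the hyperplane $\{t=0\}$ for the upper bound on $\capB$, and the extension~\eqref{eq-def-V} for the lower bound. The ``in particular'' claim will then follow by sandwiching $\capB(\itoverline{B(x_0,cr)}, B(x_0,2r))$ between the two weighted capacities and invoking the standard estimate $\cpp(B(z_0,\rho), B(z_0,2\rho)) \simeq \mu_a(B(z_0,\rho))/\rho^p \simeq \rho^{n+1+a-p} = \rho^{n-ps}$ from~\cite{HeKiMa}, which remains of the same order when the outer ball is enlarged to $B(z_0,2r)$ in the non-parabolic regime $sp<n+1+a$ (automatic under our assumptions).

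For the \emph{second inequality}, I would take $v \in C_0^\infty(B(z_0,2r))$ with $v \ge 1$ on $\itoverline{B(z_0,\rho)}$, which is the Sobolev-type admissible class producing the same infimum as the Choquet extension of Definition~\ref{def-cpa}, since the sphere $\partial B(z_0,\rho)$ is a smooth codimension-one submanifold of zero $\cpp$-capacity in the $A_p$ setting. Its restriction $w(x) := v(x,0)$ lies in $C_0^\infty(B(x_0,2r))$ and satisfies $w \ge 1$ on $\itoverline{B(x_0,\rho)}$ by continuity, hence is admissible for $\capB(\itoverline{B(x_0,\rho)}, B(x_0,2r))$. Since $v$ is itself an admissible $C_0^\infty(\R^{n+1})$-extension of $w$, the equivalence~\eqref{eq-comp-Maz-seminorms} yields $\|w\|_{\Bsp(\R^n)}^p \simle \int_{\R^{n+1}} |\grad v|^p\,d\mu_a$; passing to the infimum over $v$ gives the inequality.

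For the \emph{first inequality}, I reverse the roles: starting from $v \in C_0^\infty(B(x_0,2r))$ with $v \ge 1$ on $\itoverline{B(x_0,\rho)}$, form the extension $V$ of~\eqref{eq-def-V}. The key geometric observation is that for $(x,t) \in B(z_0,\rho/2)$ one has $|x-x_0| + |t| < \rho$, so $V(x,t)$ averages $v$ only over points of $B(x,|t|) \subset B(x_0,\rho)$ where $v \ge 1$; hence $V \ge 1$ on $B(z_0,\rho/2)$. Multiplying by the same cut-off $\eta \in C_0^\infty(B(z_0,2r))$ used in the proof of Lemma~\ref{lem-comp-cpa-capB} (with $\eta \equiv 1$ on $B(z_0,r) \supset B(z_0,\rho/2)$) produces an admissible test function for $\cpp(B(z_0,\rho/2), B(z_0,2r))$, and the estimate of $\int |\grad(V\eta)|^p\,d\mu_a$ runs verbatim as in that proof --- using Minkowski, H\"older, the support property~\eqref{eq-supp-V}, the Poincar\'e inequality~\eqref{eq-(p,p)-PI}, and~\eqref{eq-comp-Maz-extension} --- to bound it by a constant times $\|v\|_{\Bsp(\R^n)}^p$; the infimum over $v$ closes this half.

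The main obstacle I anticipate is not any individual step but the bookkeeping around the open-versus-closed ball distinction for $\cpp$ used in the second inequality, namely the identification $\cpp(B(z_0,\rho), B(z_0,2r)) = \cpp(\itoverline{B(z_0,\rho)}, B(z_0,2r))$. This follows from the vanishing of the $\cpp$-capacity of a smooth codimension-one sphere and is a standard consequence of the $A_p$-weighted theory in~\cite[Chapter~2]{HeKiMa}.
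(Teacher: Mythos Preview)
Your proposal is correct and follows essentially the same route as the paper: the extension $V$ of~\eqref{eq-def-V} for the first inequality (with the observation $V\ge1$ on $B(z_0,\tfrac12\rho)$ and the cut-off argument from Lemma~\ref{lem-comp-cpa-capB}), the restriction to $\{t=0\}$ combined with~\eqref{eq-comp-Maz-seminorms} for the second (the paper phrases this as an application of~\eqref{eq-comp-cpa-capB} plus the inclusion $\itoverline{B(x_0,\rho)}\times\{0\}\subset\itoverline{B(z_0,\rho)}$, which is the same argument packaged differently), and the weighted ball-capacity estimate from~\cite[Lemma~2.14]{HeKiMa} for the final claim. Your remark about the ``non-parabolic regime $sp<n+1+a$'' is unnecessary and slightly off-target: since in the ``in particular'' statement the inner and outer radii have the fixed ratio $c/2$ (or $c$), the estimate $\cpp(B(z_0,cr),B(z_0,2r))\simeq r^{n-ps}$ follows directly from scaling or from~\cite[Lemma~2.14]{HeKiMa} with constants depending on $c$, without any dimensional restriction.
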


\begin{proof}
Let $v\in C^\infty_0(B(x_0,2r))$ be such that $v\ge1$ on
$\itoverline{B(x_0,\rho)}$.
Then
\[
\|v\|^p_{\Bsp(\R^n)}\simeq \int_{\R^{n+1}} |\grad V|^p\,d\mu_a,
\]
where the extension $V$ is given by~\eqref{eq-def-V}.
Note that as in~\eqref{eq-supp-V}, we have
\[
\text{$V(x,t)=0$ if $|x-x_0|\ge 2r+t$}
\quad \text{and} \quad 
\text{$V(x,t)=1$ if $|x-x_0|\le \rho-t$.}
\]
In particular, $V(x,t)=1$ on $B(z_0,\tfrac12\rho)$ and hence, as in
the proof of Lemma~\ref{lem-comp-cpa-capB},  
\[
\cpp(B(z_0,\tfrac12\rho),B(z_0,2r)) \simle \int_{\R^{n+1}} |\grad V|^p\,d\mu_a.
\]
Taking infimum over all functions $v$,  which are 
admissible in the definition of $\capB(B(x_0,\rho),B(x_0,2r))$,
proves the first inequality in the statement of the lemma.
The second inequality follows from \eqref{eq-comp-cpa-capB} and the inclusion
$B(x_0,\rho)\times\{0\}\subset B(z_0,\rho)$, together with the equality
\[
\cpp(\itoverline{B(z_0,\rho)},B(z_0,2r)) = \cpp(B(z_0,\rho),B(z_0,2r)),
\]
cf.~\cite[p.~32]{HeKiMa}.

As for the last statement, it can be proved in the same
way as in
\cite[Lemma~2.14]{HeKiMa} that the weighted condensor capacity of balls in
$\R^{n+1}$ satisfies
\begin{equation}  \label{eq-est-cap-B}
\cpp(B(z_0,cr),B(z_0,2r)) \simeq \frac{1}{r^p} \int_{B(z_0,r)} |t|^a \,dx\,dt 
    \simeq r^{n+1+a-p} = r^{n-ps},
\end{equation}
where the comparison constants in $\simeq$ depend on $n$, $p$, $s$ and $c$,
but are independent of $z_0$ and $r$.
The first part of the lemma  with $\rho=cr$ then concludes the proof.
\end{proof}

\section{The Dirichlet problem and boundary regularity}
\label{sect-Dir-bdy-reg}

In this section, we let $p=2$ and $a=1-2s$, where $0<s<1$.
Note that $s=\tfrac12(1-a)$ and so Lemmas~\ref{lem-comp-cpa-capB} and \ref{lem-cpa-Brho},
as well as the comparisons~\eqref{eq-comp-cpa-capB} and~\eqref{eq-est-cap-B}, 
apply with this choice of parameters.
We write $\cpa$ instead of $\capp_{2,|t|^a}$.

The Dirichlet problem for $(-\Delta)^su=0$
was solved  by the Perron method on general open sets $\Om\subset\R^n$ and for continuous
boundary data vanishing at infinity by 
Bliedtner--Hansen~\cite[Chapter~VII]{BliHan} and Hoh--Jacob~\cite[Section~4]{HohJac}.
Solutions obtained as minimizers of energy integrals with sufficiently regular boundary 
data appear e.g.\ in
Felsinger--Kassmann--Voigt~\cite{FelKaVoi}.
Solutions defined using balayage 
were considered by  Landkof~\cite[Chapter~IV.5]{Landkof}.

As mentioned in the introduction, it follows from
Caffarelli--Silvestre~\cite[Section~4]{CafSil} that the Dirichlet problem for 
the fractional equation  $(-\Delta)^su=0$
in $\Om\subset\R^n$ can be seen as a
restriction of the Dirichlet problem for the \emph{weighted
equation}~\eqref{eq-def-degen-eq} in 
\[
G=\R^{n+1}\setm(F\times\{0\}), \quad 
\text{where} \quad F=\R^n\setm\Om.
\]
More precisely, assume that 
$f$ is continuous on $F$ and vanishes outside some bounded set.
As before, for simpler notation, we identify $F$ with $F\times\{0\}=\R^{n+1}\setm G=\bdy G$.
The solution $U$ of the Dirichlet problem in $G$ for the weighted
equation 
\begin{equation}   \label{eq-def-deg-a}
\Div(|t|^a\grad U(x,t))=0   
\end{equation}
with boundary data $f$ on $\bdy G$ then exists and can be obtained by the Perron 
method, as in Heinonen--Kilpel\"ainen--Martio~\cite[Definition~9.1]{HeKiMa}.
We point out that the point at $\infty$ in the
Dirichlet problem for~\eqref{eq-def-deg-a} on $G$
is considered as a part of the boundary of $G$ and
that~$f$ is continuous also at $\infty$, with $f(\infty)=0$.
Recall that we assume that $F$ has positive $\Bstwo$-capacity. 
The estimate~\eqref{eq-comp-cpa-capB} then implies that also $\cpa(F\times\{0\})>0$.

The \emph{upper Perron solution} in $G$ for an arbitrary bounded function
$f$ defined on $\bdy G\cup\{\infty\}$ is 
\begin{equation}  \label{eq-def-uP}
\uP_G f(z) := \inf_v v(z), \quad z\in G,
\end{equation}
with the infimum taken over all lower semicontinuously regularized supersolutions
$v$ of~\eqref{eq-def-deg-a} in $G$, which are bounded from below and
satisfy
\[
\liminf_{G\ni y\rightarrow z} v(y)   \ge f(z)\quad\text{for all }
z\in\bdy G \cup\{\infty\},
\]
see \cite[Theorem~7.25 and Definition~9.1]{HeKiMa}.

The \emph{lower Perron solution} is defined similarly using
upper semicontinuously regularized subsolutions
of~\eqref{eq-def-deg-a} or by $\lP_G f:= -\uP_G(-f)$. 
It follows directly from the definition of Perron solutions
that if $f_1\le f_2$ on $\bdy G$ then the corresponding Perron
solutions satisfy $\uP_G f_1 \le \uP_G f_2$ and
$\lP_G f_1 \le \lP_G f_2$ in $G$.
Moreover, the comparison principle between regularized 
sub- and supersolutions \cite[p.~133]{HeKiMa} yields that 
$\lP_G f\le \uP_G f$ for every $f$.

Since $\cpa(\R^{n+1}\setm G)>0$, every continuous function $f$ on
$\bdy G\cup\{\infty\}$ is \emph{resolutive}, i.e.\ 
$\uP_G f = \lP_G f$, see \cite[Theorem~9.25]{HeKiMa}.
The Perron solution will therefore be denoted $P_G f$. 
By~\cite{CafSil}, the restriction 
\(
u(x):= P_G f(x,0)
\)
satisfies the fractional equation 
\begin{equation}   \label{eq-def-frac-Lap}
(-\Delta)^su=0
\end{equation}
 in $\Om$.
Moreover, by the Kellogg property \cite[Theorem~9.11]{HeKiMa} 
and~\eqref{eq-comp-cpa-capB},
\[
\lim_{y\to x} u(y) = \lim_{z\to (x,0)} P_G f(z) = f(x)
\]
holds for all $x\in F$ outside a set of zero $\Bstwo$-capacity.
This function $u$ is a bounded solution of the Dirichlet
problem for $(-\Delta)^su=0$ with boundary data $f$ on $F$.

\begin{deff}   \label{def-reg-frac}
A boundary point $x_0\in\bdry\Om$ is \emph{regular} for $\Om$ with
respect to the fractional equation~\eqref{eq-def-frac-Lap} if for
each $f\in C(\R^n\setm\Om)$  vanishing outside some bounded set,
the Perron solution $u$ of~\eqref{eq-def-frac-Lap} 
with boundary data $f$ on $\R^n\setm\Om$ satisfies
\begin{equation}   \label{eq-def-ref-u}
\lim_{\Om\ni x\to x_0} u(x) =f(x_0).
\end{equation}
A point is \emph{irregular} if it is not regular.
\end{deff}

Following~\cite[p.~171]{HeKiMa}, we say that a point $z_0\in\bdy G$ is
\emph{regular} for $G$ with respect to the equation~\eqref{eq-def-deg-a} if 
for each boundary data $f\in C(\bdy G\cup\{\infty\})$,
the Perron solution $P_G f$ satisfies
\begin{equation}   \label{eq-reg-HKM}
\lim_{G\ni z\to z_0} P_G f(z) = f(z_0).
\end{equation}
We are now ready to prove the equivalence between the above two notions
of regular boundary points.

\begin{proof} [Proof of Theorem~\ref{thm-equiv-reg-intro}]
Assume  that $z_0$ is regular for $G$ and let $f$ be as in
Definition~\ref{def-reg-frac}. 
Then clearly, by the definition of $u$ and by~\eqref{eq-reg-HKM},
\[
\lim_{\Om\ni x\to x_0} u(x) = \lim_{G\ni z\to z_0} P_G f(z) =f(x_0).
\]
Hence, $x_0$ is regular for $\Om$.

Conversely, assume that $z_0$ is not regular for $G$. 
Then there exists a continuous function $f$ on $\bdry G\cup\{\infty\}$ such that the
corresponding Perron solution $P_G f$ 
of the weighted equation~\eqref{eq-def-deg-a} in $G$ with
boundary data $f$ fails~\eqref{eq-reg-HKM}.
By adding a constant and changing the sign, if needed, 
we can without
loss of generality assume that $f\ge0$ on $\bdy G$ and that
\begin{equation}   \label{eq-assume-u<f}
0\le \liminf_{G\ni z\to z_0} P_Gf(z) < f(z_0).
\end{equation}
Multiplying $f$ by a continuous cut-off function 
with compact support,  
we can moreover assume that $f$ vanishes outside some bounded set.
To conclude the proof, it suffices to show 
that also~\eqref{eq-def-ref-u} fails for the boundary data $f$.
So, assume for a contradiction that~\eqref{eq-def-ref-u} holds and define 
\[
\ft(z) = \begin{cases}  f(z), &\text{if } z\notin G,\\
                        P_Gf(z), &\text{if } z\in G.
           \end{cases}
\]
Let $B$ be a ball centred at $z_0$ and consider the upper half-ball 
\[
\Bplus:= \{(x,t)\in B: t>0\}.
\]
Then $\ft$ is a bounded function on $\bdy\Bplus$, which is continuous at $z_0$,
because of the assumption~\eqref{eq-def-ref-u}.

We claim that the restriction to $\Bplus$ of $P_Gf$ is the Perron
solution for $\ft$ in $\Bplus$.
Indeed, if $v$ is admissible in the definition~\eqref{eq-def-uP} of
$\uP_Gf=P_Gf$, then by the definition of $\ft$ and the continuity of $P_Gf$ in $G$, 
\[
\liminf_{\Bplus\ni y\to z}v(y) 
      \ge \begin{cases}  f(z)=\ft(z), &\text{if } z\in \bdy\Bplus\cap \bdy G,\\
                        {\displaystyle \lim_{\Bplus\ni y\to z} P_G(y) = \ft(z)}, &\text{if } z\in \bdy\Bplus\cap G.
           \end{cases}
\]
Hence
\[
v\ge \uP_{\Bplus} \ft  \quad \text{in } \Bplus,
\]
and taking infimum over all such $v$ shows that $P_Gf\ge \uP_{\Bplus}\ft$ in $\Bplus$.
Similarly, $P_G f\le \lP_{\Bplus}\ft$ in $\Bplus$.
Since also $\lP_{\Bplus}\ft \le \uP_{\Bplus}\ft$, we see that the
function $\ft$ is resolutive and $P_Gf$ is the Perron
solution of~\eqref{eq-def-deg-a} in $\Bplus$ with boundary data~$\ft$.

Now, by the corkscrew condition \cite[Theorem~6.31]{HeKiMa} (with
respect to $\R^{n+1}$), 
$z_0$ is a regular boundary point for $\Bplus$ and~\eqref{eq-def-deg-a}.
Since $\ft$ is bounded on $\bdy\Bplus$ and continuous at $z_0$, 
Lemma~9.6 in~\cite{HeKiMa} implies that 
\[
\lim_{\Bplus \ni z\to z_0} P_Gf(z) 
= \ft(z_0) = f(x_0).
\]
A similar argument applied to $\Bminus:= \{(x,t)\in B: t<0\}$,
together with the assumption that~\eqref{eq-def-ref-u} holds, then gives
\begin{equation*}   
\lim_{G\ni z\to z_0} P_Gf(z) = f(z_0),
\end{equation*}
which contradicts~\eqref{eq-assume-u<f} and concludes the proof.
\end{proof}

We can now make use of the Wiener criterion for the weighted equation
\eqref{eq-def-deg-a} in $G\subset\R^{n+1}$, provided by
Heinonen--Kilpel\"ainen--Martio~\cite[Theorem~21.30]{HeKiMa} or
Fabes--Jerison--Kenig~\cite{FaJeKe}.

\begin{proof}  
[Proof of Theorem~\ref{thm-frac-Wiener-Besov}]
By Theorem~\ref{thm-equiv-reg-intro}, the regularity of $x_0$ with respect 
to the fractional equation $(-\Delta)^su=0$ is equivalent
to the regularity of $z_0$ with respect to the weighted equation~\eqref{eq-def-deg-a}.
This is in turn, by the Wiener criterion~\cite[Theorem~21.30 and
  (6.17)]{HeKiMa} with $p=2$, equivalent to the condition
\[
\int_0^1 \frac{\cpa(F_r,B(z_0,2r))}{\cpa(B(z_0,r),B(z_0,2r))}
         \,\frac{dr}{r} = \infty.
\]
Here we have used the monotonicity of $\cpa$, together with estimates 
similar to the proof of \cite[Lemma~2.16]{HeKiMa}, to replace
$\bdy G \cap B(z_0,r)$ by the compact set 
$F_r=\itoverline{B(x_0,r)}\setm\Om$ in the above integral.
Finally, the estimates~\eqref{eq-comp-cpa-capB}   and~\eqref{eq-est-cap-B} 
(with $p=2$) conclude the proof.
\end{proof}

\medskip

We conclude the paper with two additional properties of regular boundary
points for the fractional equation $(-\Delta)^s u=0$.

\begin{cor}[Kellogg property]  \label{cor-Kellogg}
The set of irregular boundary points for the fractional equation $(-\Delta)^s u=0$ has
$\Bstwo$-capacity zero.
\end{cor}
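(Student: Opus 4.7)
The plan is to transfer the Kellogg property from the weighted degenerate equation~\eqref{eq-def-deg-a} on $G\subset\R^{n+1}$ to the nonlocal equation $(-\Delta)^s u=0$ on $\Om$, using Theorem~\ref{thm-equiv-reg-intro} to identify irregular boundary points and the capacity comparison~\eqref{eq-comp-cpa-capB} to pass between the $\cpa$-capacity in $\R^{n+1}$ and the $\Bstwo$-capacity in $\R^n$.

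First, let $I\subset\bdy\Om$ denote the set of points that are irregular for $\Om$ with respect to the fractional equation. By Theorem~\ref{thm-equiv-reg-intro}, a point $x_0\in\bdy\Om$ lies in $I$ if and only if $z_0=(x_0,0)$ is irregular for $G$ with respect to the weighted equation~\eqref{eq-def-deg-a}. Hence the set $I\times\{0\}\subset\bdy G$ is contained in the set of irregular boundary points of $G$ for that weighted equation.

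Second, I would invoke the Kellogg property for weighted $\Cp$-type equations proved in Heinonen--Kilpel\"ainen--Martio~\cite[Theorem~9.11]{HeKiMa}, applied with $p=2$ and the admissible weight $w(x,t)=|t|^a$ with $a=1-2s$ discussed in Section~\ref{sect-cap}. This yields that the set of irregular boundary points of $G$ with respect to~\eqref{eq-def-deg-a} has $\cpa$-capacity zero in $\R^{n+1}$. Combined with the first step, it follows that $I\times\{0\}$ has $\cpa$-capacity zero.

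Finally, I would conclude by applying the capacity comparison~\eqref{eq-comp-cpa-capB} from Lemma~\ref{lem-comp-cpa-capB}, which is stated after the lemma in the equivalent form that a Borel subset of $\R^n$ has zero $\Bstwo$-capacity if and only if (identified with its embedding at $t=0$) it has zero $\cpa$-capacity in $\R^{n+1}$. Applying this to $I$ gives that $I$ has $\Bstwo$-capacity zero in $\R^n$, as desired. I do not expect any serious obstacle here: the two key inputs (the identification of regular points in Theorem~\ref{thm-equiv-reg-intro} and the equivalence of capacities) have already been established, and the weighted Kellogg property is a direct reference. The only mildly delicate point is verifying that the restriction map $x\mapsto(x,0)$ preserves the notion of "zero capacity" in the correct direction, which is exactly the content of the remark following Lemma~\ref{lem-comp-cpa-capB}.
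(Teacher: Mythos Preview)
Your proposal is correct and follows essentially the same approach as the paper: the paper's proof is a one-line reference to Theorem~\ref{thm-equiv-reg-intro}, the capacity comparison~\eqref{eq-comp-cpa-capB}, and the Kellogg property \cite[Theorem~9.11]{HeKiMa} for the weighted equation, which is exactly the three-step argument you spell out. Your additional remark about the equivalence of zero-capacity sets is precisely the observation recorded after Definition~\ref{def-capB}.
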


\begin{proof}
This follows immediately from Theorem~\ref{thm-equiv-reg-intro}, together 
with~\eqref{eq-comp-cpa-capB} and the 
Kellogg property \cite[Theorem~9.11]{HeKiMa} for the weighted equation~\eqref{eq-def-deg-a}.
\end{proof}

\begin{prop}  \label{prop-decay-est}
Assume that $x_0\in\bdy\Om$ is regular for $(-\Delta)^s u=0$ and that
$f\in C(\R^n\setm\Om)$ vanishes outside some bounded set.
Let $u$ be the Perron solution  of $(-\Delta)^s u=0$
in $\Om$ with boundary data $f$ on $\R^n\setm\Om$. 
Then for all $0<\rho\le R<\infty$,
\[
\sup_{\Om\cap B(x_0,\rho)}u \le \sup_{F_{2R}}f + \Bigl( \sup_{\R^n\setm\Om}f -
\sup_{F_{2R}}f \Bigr)
    \exp\biggl( - C\int_\rho^R \frac{\capBtwo(F_r,B(x_0,2r))}{r^{n-2s}}
         \,\frac{dr}{r} \biggr), 
\]
where $F_r=\itoverline{B(x_0,r)}\setm \Om$ and $C$ depends only on $n$ and $s$.
\end{prop}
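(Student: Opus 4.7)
The plan is to lift to the Caffarelli--Silvestre extension and apply the weighted Wiener-type boundary decay estimate from Heinonen--Kilpel\"ainen--Martio~\cite{HeKiMa}, then translate the resulting capacity integral into the Besov form using~\eqref{eq-comp-cpa-capB} and~\eqref{eq-est-cap-B}.

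First, set $U = P_G f$, so that $u(x) = U(x,0)$ on $\Om$ as in Section~\ref{sect-Dir-bdy-reg}, and by Theorem~\ref{thm-equiv-reg-intro} the point $z_0 = (x_0,0)$ is regular for $G$ with respect to~\eqref{eq-def-deg-a}. Writing $m := \sup_{F_{2R}} f$ and $M := \sup_{\R^n\setm\Om} f$, the truncation $V := (U-m)^+$ is a nonnegative bounded weighted subsolution of~\eqref{eq-def-deg-a} in $G$ whose continuous boundary values $(f-m)^+$ on $\bdy G$ vanish on $F_{2R}$ and are bounded above by $M-m$ everywhere on $\bdy G\cup\{\infty\}$.

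Next, apply the one-sided analog of the boundary decay estimate that underlies the sufficiency part of the weighted Wiener criterion; see \cite[Chapter~6 and the proof of Theorem~21.30]{HeKiMa} and also~\cite{FaJeKe}. Iterating the weak maximum principle for $V$ over a geometric sequence of annular shells $B(z_0, 2^{-k+1} R) \setm B(z_0, 2^{-k} R) \subset G$ yields, in each shell, a multiplicative gain of the form $1 - c\,\cpa(F_r, B(z_0, 2r))/\cpa(B(z_0, r), B(z_0, 2r))$. Telescoping these factors and passing from the resulting dyadic sum to an integral gives
\[
\sup_{B(z_0,\rho)\cap G} V \le (M-m) \exp\biggl( -C \int_\rho^R \frac{\cpa(F_r, B(z_0, 2r))}{\cpa(B(z_0, r), B(z_0, 2r))}\,\frac{dr}{r}\biggr)
\]
for all $0<\rho\le R<\infty$.

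Finally, the comparison $\cpa(B(z_0, r), B(z_0, 2r)) \simeq r^{n-2s}$ from~\eqref{eq-est-cap-B} and $\cpa(F_r, B(z_0, 2r)) \simeq \capBtwo(F_r, B(x_0, 2r))$ from~\eqref{eq-comp-cpa-capB} recast the exponent in the desired Besov form, while the inclusion $\Om \cap B(x_0, \rho) \subset G \cap B(z_0, \rho)$ together with $u(x) = U(x,0) \le m + V(x,0)$ transfers the bound from $V$ to $u$. The main obstacle is extracting the precise one-sided form in the middle step: \cite{HeKiMa} typically records the Wiener decay as an oscillation estimate, so one must verify that the same annular iteration drives the capacity factor in each shell when $V$ vanishes only on the potentially much smaller trace $F_{2R}\cap \bdy B(z_0,\cdot)$ while being only bounded by $M-m$ elsewhere on $\bdy G$.
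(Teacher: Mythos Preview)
Your overall strategy---lift via the Caffarelli--Silvestre extension, invoke the weighted boundary decay from \cite{HeKiMa}, then translate the capacity integral using~\eqref{eq-comp-cpa-capB} and~\eqref{eq-est-cap-B}---is exactly the paper's. The difference is in how the middle step is justified. The paper flags explicitly that \cite[Theorem~6.18]{HeKiMa} is stated for \emph{bounded} domains with \emph{continuous Sobolev} boundary data, whereas $G$ is unbounded and $P_Gf$ is a priori only in a local Sobolev space. To sidestep this, the paper constructs for each $\eps>0$ a smooth majorant $\hat f_\eps\in C^\infty(\R^{n+1})$ with $\hat f_\eps\equiv m_\eps:=\sup_{F_{2R+\eps}}f$ on $B(z_0,2R)$ and $\hat f_\eps\equiv M$ outside $B(z_0,2R+\eps)$, applies \cite[Theorem~6.18]{HeKiMa} to $\hat f_\eps$ on the bounded set $G\cap B(z_0,2R+\eps)$, uses the comparison $P_Gf\le P_G\hat f_\eps$, and lets $\eps\to0$.

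Your route via the truncation $V=(P_Gf-m)^+$ is plausible in spirit, but it runs head-on into the very issue the paper is working around: the annular iteration in \cite[Chapter~6]{HeKiMa} requires the subsolution to lie in the weighted $W^{1,2}$ space on the relevant domain, not merely $W^{1,2}_{\rm loc}$, and you have not established this for $V$ on $G$ (nor is it automatic, e.g.\ when $m\le0$). You yourself note in the last paragraph that the one-sided form is not quite what \cite{HeKiMa} records; the smooth-majorant trick is precisely what lets the paper quote the theorem verbatim rather than reopen the iteration. So the gap in your sketch is real, and the paper's construction of $\hat f_\eps$ is the missing idea that closes it.
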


\begin{proof}
We shall use \cite[Theorem~6.18]{HeKiMa}, where a similar decay
estimate is proved for the weighted equation~\eqref{eq-def-deg-a} in
bounded domains and with continuous Sobolev boundary data. 
Recall that $u$ is the restriction to $\Om$ of the Perron solution $P_Gf$
for~\eqref{eq-def-deg-a} in $G=\R^{n+1}\setm((\R^n\setm\Om)\times\{0\})$.
Since $G$ is unbounded and the Perron solution in general only belongs to a
local Sobolev space, we proceed as follows.
 
Let $\eps>0$, $m_\eps=\sup_{F_{2R+\eps}}f$ and $M=\sup_{\R^n\setm\Om}f$.
The Perron solution $P_Gf$ of~\eqref{eq-def-deg-a}
clearly satisfies $P_Gf\le M$.
Find $\fb_\eps \in C^\infty(\R^{n+1})$ such that
\[
\text{$\fb_\eps=m_\eps=\min_{\R^{n+1}}\fb_\eps$ in $B(z_0,2R)$}
\quad \text{and} \quad
\text{$\fb_\eps=M=\max_{\R^{n+1}}\fb_\eps$ outside $B(z_0,2R+\eps)$.}
\]
Then $\fb_\eps\ge f$ on $\R^{n+1}\setm G$.
Theorem~6.18 in \cite{HeKiMa}, applied to $\fb_\eps$ and the bounded set 
$G\cap B(z_0,2R+\eps)$, gives as in the proof of Theorem~\ref{thm-frac-Wiener-Besov} that
\begin{align*}
&\sup_{\Om\cap B(x_0,\rho)} u \le \sup_{G\cap B(z_0,\rho)} P_Gf \le
\sup_{G\cap B(z_0,\rho)} P_G \fb_\eps \\
&\le \fb_\eps(z_0) + \osc_{F_{2R}} \fb_\eps 
+ (M-m_\eps)
    \exp\biggl( - C\int_\rho^R \frac{\cpa(F_r,B(z_0,2r))}{\cpa(B(z_0,r),B(z_0,2r))}
         \,\frac{dr}{r} \biggr). 
\end{align*}
Since $\osc_{F_{2R}} \fb_\eps =0$ and 
$\fb_\eps(z_0)=m_\eps\to \sup_{F_{2R}}f$ as $\eps\to0$,
the estimates~\eqref{eq-comp-cpa-capB} and~\eqref{eq-est-cap-B}
(with $p=2$) conclude the proof.
\end{proof}

\end{document}